\documentclass{amsart}

\usepackage{amssymb}
\usepackage[british]{babel}

\newcommand{\fa}[2]{{#1}\langle{#2}\rangle}
\newcommand{\N}{\mathbb{N}}
\newcommand{\Z}{\mathbb{Z}}
\newcommand{\Q}{\mathbb{Q}}
\newcommand{\field}{\Bbbk}

\theoremstyle{plain}
\newtheorem{theorem}{Theorem}
\newtheorem{proposition}[theorem]{Proposition}
\newtheorem{lemma}[theorem]{Lemma}
\newtheorem{corollary}[theorem]{Corollary}

\theoremstyle{remark}
\newtheorem*{remark}{Remark}

\DeclareMathOperator{\End}{End}
\DeclareMathOperator{\trace}{tr}
\DeclareMathOperator{\imag}{im}
\DeclareMathOperator{\charac}{char}

\begin{document}

\title{Rationality of the Hilbert series of Hopf-invariants of free algebras}

\author{Vitor O.~Ferreira}
\thanks{The first author was partially supported by
  CNPq (Grant 308163/2007-9) and by FAPESP (Projeto Tem\'atico 2009/52665-0).}
  
\author{Lucia S.~I.~Murakami}
\thanks{The second author was partially supported by FAPESP (Projeto Tem\'atico 2010/50347-9).}

\address{Department of Mathematics, Institute of Mathematics
and Statistics, University of S\~ao Paulo, Caixa Postal 66281,
S\~ao Paulo - SP, 05314-970, Brazil}

\email{vofer@ime.usp.br, ikemoto@ime.usp.br}

\date{26 May 2011}

\subjclass[2010]{16S10, 16T05, 16W22,15A72}

\keywords{Free associative algebra; Hopf algebra actions; invariants}

\begin{abstract}
  It is shown that the subalgebra of invariants of a free associative
  algebra of finite rank under a linear action of a semisimple Hopf
  algebra has a rational Hilbert series with respect to the usual
  degree function, whenever the ground field has zero characteristic.
\end{abstract}

\maketitle

\section*{Introduction}

The results contained in this paper contribute
to the study of the structure of free algebras under linear actions
of Hopf algebras that the authors have been pursuing recently. For
instance, it has been proved in \cite{FMP04} that the invariants of a free 
associative algebra $\fa{\field}{X}$ under a linear action of a Hopf algebra $H$ form a 
free subalgebra. Moreover, if $H$ finite-dimensional and pointed, then,
in the same paper, a Galois correspondence between right coideal subalgebras
of $H$ and free subalgebras of $\fa{\field}{X}$ containing the subalgebra $\fa{\field}{X}^H$
of invariants of $\fa{\field}{X}$ under $H$ has been described. 
Additionally, if $H$ is a finite-dimensional Hopf algebra 
which is generated by grouplikes and skew-primitives, then in \cite{FM07}
it is shown that the free subalgebra $\fa{\field}{X}^H$ of $\fa{\field}{X}$
is finitely generated only when the action of $H$ is scalar.

The object under analysis in this note is the Hilbert series of the subalgebra
of invariants $\fa{\field}{X}^H$, for $H$ a semisimple Hopf algebra. 
Since a linear action $H$ on $\fa{\field}{X}$ is homogeneous, the subalgebra 
$\fa{\field}{X}^H$
inherits the grading of $\fa{\field}{X}$ by the usual degree. Therefore,
if $X$ is finite, the Hilbert series of $\fa{\field}{X}$ with respect to
this grading is defined. We show that for a field $\field$ with $\charac{\field}=0$,
this Hilbert series is a rational function. The proof of this fact provides
an explicit formula for this rational function in terms of characters of
the action of $H$ on the subspace generated by $X$.
We are eventually able to obtain Dicks-Formanek's formula for the
case of finite groups of automorphisms of $\fa{\field}{X}$ in \cite{DF82}
as a special case.

\medskip

Hereafter vector spaces and algebras will be defined over a field $\field$;
tensor products are taken over $\field$. The symbol $\N$ will denote the
set of nonnegative integers and $\Q$, the field of rational numbers.

We refer the reader to \cite{DNR01} or \cite{sM93a} for the usual notation,
definitions and general facts on Hopf algebra theory. In particular, we shall
denote the comultiplication and counit maps of a Hopf algebra by $\Delta$ and
$\varepsilon$, respectively, and we shall adopt Sweedler's notation 
$\Delta(h)=\sum_{(h)} h_{(1)}\otimes h_{(2)}$, for an element $h$ in a Hopf algebra.

\section{Graded module algebras}

Given an algebra $H$ and an $H$-module $W$, we let
$$
\rho_W\colon H\longrightarrow\End W
$$
denote the algebra map that affords the $H$-module structure on $W$,
that is, $\rho_W$ is defined by
$$
\rho_W(h)(w) = h\cdot w,\quad\text{for all $h\in H$ and $w\in W$.}
$$
If $W$ is finite-dimensional, denote by $\chi_W$ the character of
$W$, that is, $\chi_W\colon H\rightarrow \field$ is the linear
map given by
$$
\chi_W(h) = \trace(\rho_W(h)), \quad\text{for all $h\in H$,}
$$
where $\trace$ stands for the trace of a linear endomorphism.

Now suppose that $H$ is a Hopf algebra. In this case,
$W^H=\{w\in W : h\cdot w = \varepsilon(h)w\text{, for all $h\in H$}\}$
is an $H$-submodule of $W$.

We shall make use of characters of modules over a semisimple
Hopf algebra. For that, we recall some facts on semisimple Hopf algebras. 
(See \cite{DNR01} for proofs.) A Hopf
algebra $H$ is said to be semisimple if it is semisimple as
an algebra. An element $t$ of a Hopf algebra $H$ is called a left 
integral in $H$ if $ht=\varepsilon(h)t$, for all $h\in H$.
Semisimple Hopf algebras can be characterized in terms of
integrals via the following version of Maschke's theorem:
a Hopf algebra $H$ is semisimple if and only if there exist
a left integral $t$ in $H$ with $\varepsilon(t)=1$. Moreover,
semisimple Hopf algebras are finite-dimensional. Because
left integrals with counit $1$ in a semisimple Hopf algebra
are central idempotent elements, we have the following (easily proved)
well-known facts.

\begin{proposition}\label{prop:1}
Let $H$ be a semisimple Hopf algebra, let $t\in H$ be a left integral
with $\varepsilon(t)=1$ and let $W$ be a finite-dimensional $H$-module. Then $\rho_W(t)$
is an $H$-module endomorphism of $W$ satisfying $\rho_W(t)^2=\rho_W(t)$
and $\imag\rho_W(t)=W^H$. In particular, if $\charac\field=0$, then
$\dim W^H = \chi_W(t)$.\qed
\end{proposition}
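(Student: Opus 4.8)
The plan is to verify each assertion directly from two facts already recorded: that a left integral $t$ with $\varepsilon(t)=1$ in a semisimple Hopf algebra is a central idempotent, and that $\rho_W$ is an algebra homomorphism. First I would show that $\rho_W(t)$ is an $H$-module endomorphism. Since $t$ is central in $H$, for every $h\in H$ we have $ht=th$, whence $\rho_W(h)\rho_W(t)=\rho_W(ht)=\rho_W(th)=\rho_W(t)\rho_W(h)$; thus $\rho_W(t)$ commutes with the action of every element of $H$, which is precisely the statement that it is an $H$-module map. The idempotency $\rho_W(t)^2=\rho_W(t)$ is then immediate: because $\rho_W$ is multiplicative and $t^2=t$, one has $\rho_W(t)^2=\rho_W(t^2)=\rho_W(t)$.

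Next I would identify the image of $\rho_W(t)$ with $W^H$ by proving the two inclusions separately. For $\imag\rho_W(t)\subseteq W^H$, I would use the defining property $ht=\varepsilon(h)t$ of a left integral: for any $w\in W$ and $h\in H$,
$$
h\cdot(t\cdot w)=(ht)\cdot w=\varepsilon(h)\,(t\cdot w),
$$
so $t\cdot w$ lies in $W^H$. For the reverse inclusion, I would use $\varepsilon(t)=1$: if $w\in W^H$ then $t\cdot w=\varepsilon(t)w=w$, so $w=\rho_W(t)(w)\in\imag\rho_W(t)$. This same computation shows that $\rho_W(t)$ restricts to the identity on $W^H$, confirming that it is the projection onto $W^H$ determined by the decomposition $W=\imag\rho_W(t)\oplus\ker\rho_W(t)$ attached to the idempotent.

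Finally, to obtain $\dim W^H=\chi_W(t)$ in characteristic zero, I would compute the trace of the idempotent $\rho_W(t)$. Choosing a basis of $W$ adapted to the splitting $W=W^H\oplus\ker\rho_W(t)$, the matrix of $\rho_W(t)$ is diagonal with entries $1$ on the first block and $0$ on the second, so $\chi_W(t)=\trace(\rho_W(t))$ equals $(\dim_\field W^H)\cdot 1_\field$. I do not expect any genuine obstacle in the algebraic verifications; the only point requiring care is this last step, where the equality $\dim W^H=\chi_W(t)$ is really an identification of a nonnegative integer with an element of $\field$. This identification is legitimate precisely because $\charac\field=0$: the natural map $\N\to\field$ sending $n$ to $n\cdot 1_\field$ is then injective, so the integer $\dim W^H$ is recovered unambiguously from the scalar $\chi_W(t)$. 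In positive characteristic the trace would determine $\dim W^H$ only modulo the characteristic, which is why the hypothesis is imposed.
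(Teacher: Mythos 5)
Your proof is correct and is exactly the argument the paper intends: the paper states this proposition without proof, remarking only that it follows easily from the fact that a left integral with $\varepsilon(t)=1$ in a semisimple Hopf algebra is a central idempotent, and your verification (centrality gives the $H$-module map property, idempotency of $t$ gives idempotency of $\rho_W(t)$, the integral property and $\varepsilon(t)=1$ give the two inclusions for the image, and the trace of an idempotent recovers the rank in characteristic zero) fills in precisely those details. Your closing observation about why $\charac\field=0$ is needed to identify the integer $\dim W^H$ with the scalar $\chi_W(t)$ is also the right one.
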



\medskip

We now specialize to Hopf algebra actions on graded algebras.

An algebra $A$ is said to be graded if there exist subspaces $A_n$, for $n\in\N$,
satisfying
$$
A=\bigoplus_{n\in\N}A_n\quad\text{and}\quad 
A_mA_n\subseteq A_{m+n}\text{, for all $m,n\in\N$.}
$$
When $\dim A_n$ is finite, for all $n\in\N$, we associate to the
grading of $A$ a formal power series with integer coefficients, called the
Hilbert (or Poincar\'e) series of $A$, defined by
$$
P(A,z) = \sum_{n\in\N}(\dim A_n)z^n\in \Z[[z]].
$$

Now let $H$ be a Hopf algebra and suppose that the graded algebra $A$ is
an $H$-module algebra. We say that the action of $H$ on $A$ is homogeneous
with respect to the grading if $H\cdot A_n\subseteq A_n$, for all $n\in\N$,
or, equivalently, if $A_n$ is an $H$-submodule of $A$, for all $n\in\N$.
In this case, $A^H$ is a subalgebra of $A$ which inherits the grading from $A$: 
$$
A^H = \bigoplus_{n\in\N} (A_n\cap A^H).
$$
And, hence, it makes sense to speak of the Hilbert series 
$P(A^H,z)$ of $A^H$; since $A_n\cap A^H = A_n^H$, it follows
that $P(A^H,z)=\sum_{n\in\N}(\dim A_n^H)z^n$. We shall make use of
Proposition~\ref{prop:1} in order to determine the coefficients of
this series. In the above context we shall denote the character
of the $H$-module $A_n$ by, simply, $\chi_n$. So, applying 
Proposition~\ref{prop:1} to the $H$-modules $A_n$, we obtain the
following.

\begin{corollary}\label{cor:2}
Let $H$ be a semisimple Hopf algebra and let $t$ be a left integral
in $H$ with $\varepsilon(t)=1$. Let $A=\bigoplus_{n\in\N}A_n$ be a graded
$H$-module algebra with a homogeneous action of $H$ and suppose that
$\dim A_n$ is finite for all $n\in\N$. Then
$P(A^H,z) =\sum_{n\in\N}\chi_n(t)z^n$.\qed
\end{corollary}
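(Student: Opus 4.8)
The plan is to obtain the formula by applying Proposition~\ref{prop:1} term by term to the homogeneous components $A_n$. First I would record, as already observed in the text, that homogeneity of the action makes each $A_n$ an $H$-submodule of $A$, and that $A^H\cap A_n = A_n^H$; consequently the Hilbert series of $A^H$ may be written as
$$
P(A^H,z) = \sum_{n\in\N}(\dim A_n^H)\,z^n.
$$
Thus the whole task reduces to evaluating each coefficient $\dim A_n^H$, and the series identity will follow once these dimensions are expressed through the characters $\chi_n$.

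The key step is to apply Proposition~\ref{prop:1} with $W=A_n$. By hypothesis $\dim A_n$ is finite, so $A_n$ is a finite-dimensional $H$-module and the proposition is available. Working over a field with $\charac\field=0$, its final assertion gives $\dim A_n^H = \chi_{A_n}(t)$, which by the stated convention is exactly $\chi_n(t)$. Substituting this into the display above yields
$$
P(A^H,z) = \sum_{n\in\N}\chi_n(t)\,z^n,
$$
as claimed. Note that the identity is one of formal power series in $\Z[[z]]$, so no question of convergence arises: one is simply comparing coefficients of $z^n$ on each side.

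The genuine content of the argument lies not in this corollary but in Proposition~\ref{prop:1}: the point being invoked is that $\rho_{A_n}(t)$ is an idempotent $H$-endomorphism with image $A_n^H$, so that its trace $\chi_n(t)=\trace(\rho_{A_n}(t))$ equals the rank of the idempotent, and in characteristic zero this rank coincides with $\dim A_n^H$. I therefore do not expect any real obstacle in the proof of the corollary itself; the only subtlety worth flagging is the characteristic-zero hypothesis inherited from Proposition~\ref{prop:1}. Without it the trace of an idempotent need not recover the dimension of its image as an element of $\field$, and the coefficientwise equality $\dim A_n^H = \chi_n(t)$ may fail.
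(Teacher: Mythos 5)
Your proof is correct and is exactly the argument the paper intends: the corollary carries an immediate \qed\ because, as the preceding text explains, it follows by applying Proposition~\ref{prop:1} to each homogeneous component $A_n$ together with the observation that $A^H\cap A_n=A_n^H$, which is precisely your term-by-term computation. Your closing remark is also well taken: the statement of the corollary silently inherits the hypothesis $\charac\field=0$ from Proposition~\ref{prop:1}, without which the coefficientwise identity $\dim A_n^H=\chi_n(t)$ can fail.
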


\section{Linear actions}

In this section, we investigate the Hilbert series of invariants
of tensor algebras under homogeneous actions of a semisimple Hopf algebra.

Let $H$ be Hopf algebra and let $V$ be an $H$-module. Let $T(V)_0=\field$, viewed
as the trivial $H$-module, and for $n\in\N, n\geq 1$, let $T(V)_n=V\otimes T(V)_{n-1}$. 
The $H$-module structure on $V$ induces an $H$-module structure on $T(V)_n$ such
that
$$
h\cdot(v_1\otimes\dots\otimes v_n) = 
\sum_{(h)}(h_{(1)}\cdot v_1)\otimes\dots\otimes(h_{(n)}\cdot v_n),
$$
for $h\in H$ and $v_i\in V$ for $i=1,\dots,n$.

Now let
$$
T(V) = \bigoplus_{n\in\N} T(V)_n
$$
be the tensor algebra of $V$. It is an $H$-module; in fact, $T(V)$
is an $H$-module algebra. When the tensor algebra of a vector
space $V$ has an $H$-module algebra structure which is induced by 
an $H$-module structure on $V$, we shall call the action of $H$ on
$T(V)$ linear.

The tensor algebra $T(V)$ is, by definition, graded by the
tensor powers of $V$ and a linear action of a Hopf algebra
$H$ on $T(V)$ is homogeneous. Therefore, we can apply the the
ideas of the previous section to this case and look into
the Hilbert series of the subalgebra of invariants $T(V)^H =
\bigoplus_{n\in\N} T(V)_n^H$ of $T(V)$ under the action of $H$.

\begin{remark}
Once a basis $X$ of $V$ is fixed, we have a natural isomorphism between 
$T(V)$ and the free associative algebra
$\fa{\field}{X}$ on $X$ over $k$. So if $\dim V= d$, then $T(V)$ is a free
algebra of rank $d$ over $k$. Under this viewpoint a linear action
of a Hopf algebra $H$ on $T(V)$ is just an $H$-module algebra structure on
$\fa{\field}{X}$ such that for each free generator $x\in X$ and each $h\in H$,
there exist scalars $\lambda_y$, $y\in X$, a finite number of which nonzero,
such that $h\cdot x = \sum_{y\in X}\lambda_yy$.
\end{remark}

We shall present conditions under which the Hilbert series of
$T(V)^H$ is a rational function. 

Let $H$ be a Hopf algebra, let $V$ be a finite-dimensional vector space and suppose that 
$T(V)$ is an $H$-module algebra with a linear action of $H$. For each
$h\in H$, we shall denote by $\alpha_h(z)\in\field[[z]]$ the
power series defined by
$$
\alpha_h(z) = \sum_{n\in\N}\chi_n(h)z^n,
$$
where $\chi_n$ denotes the character of the $H$-module $T(V)_n$.

\begin{lemma} \label{le:3}
Let $H$ be a finite dimensional Hopf algebra with basis $\{h_1,\dots,h_r\}$.
For each $i=1,\dots,r$, denote $\alpha_i(z) = \alpha_{h_i}(z)$. 
Then $\alpha_i(z)\in \field(z)$, for every $i=1,\dots,r$.
\end{lemma}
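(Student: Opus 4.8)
My plan is to produce a closed-form expression for the generating functions $\alpha_h(z)$ by encoding the characters $\chi_n(h)$ through a single matrix whose entries are the characters of $H$ acting on $V$. The key observation is that the character $\chi_n$ of $T(V)_n = V^{\otimes n}$ is determined by the comultiplication: since $h\cdot(v_1\otimes\dots\otimes v_n)=\sum_{(h)}(h_{(1)}\cdot v_1)\otimes\dots\otimes(h_{(n)}\cdot v_n)$, the trace factorises as $\chi_n(h)=\sum_{(h)}\chi_V(h_{(1)})\cdots\chi_V(h_{(n)})$, using that the trace of a tensor product of endomorphisms is the product of their traces. In other words, writing $\Delta^{(n-1)}$ for the iterated comultiplication into $H^{\otimes n}$, we have $\chi_n=\chi_V^{\otimes n}\circ\Delta^{(n-1)}$.

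First I would fix the basis $\{h_1,\dots,h_r\}$ of $H$ and record the structure constants of the comultiplication: write $\Delta(h_k)=\sum_{i,j} c^{k}_{ij}\,h_i\otimes h_j$ with $c^{k}_{ij}\in\field$. I would then introduce the scalars $a_i=\chi_V(h_i)$ and assemble the $r\times r$ matrix $M$ over $\field$ with entries $M_{ki}=\sum_{j}c^{k}_{ij}\,a_j$; this $M$ is precisely the matrix of the linear map $H\to H$ sending $h\mapsto\sum_{(h)}h_{(1)}\chi_V(h_{(2)})$ expressed in the chosen basis. The multiplicativity of the trace over tensor products, combined with coassociativity, should give the recursion $\chi_{n+1}(h)=\sum_{(h)}\chi_V(h_{(1)})\chi_n(h_{(2)})$, which in coordinates reads $\chi_{n+1}(h_k)=\sum_i M_{ki}\,\chi_n(h_i)$. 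Thus the column vector $c_n=(\chi_n(h_1),\dots,\chi_n(h_r))^{\mathsf T}$ satisfies $c_{n+1}=M\,c_n$, whence $c_n=M^n c_0$, where $c_0$ records the characters of the trivial module $T(V)_0=\field$, namely $\chi_0(h_i)=\varepsilon(h_i)$.

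With this in hand, the generating function assembles into a geometric-type series: $\sum_n c_n z^n=\bigl(\sum_n M^n z^n\bigr)c_0=(I-zM)^{-1}c_0$, valid as an identity of power series over $\field$ provided $I-zM$ is invertible over the rational-function field $\field(z)$. The matrix $I-zM$ has entries in $\field[z]$ and determinant a polynomial in $\field[z]$ with nonzero constant term (equal to $1$), so it is indeed invertible over $\field(z)$, and by Cramer's rule each entry of $(I-zM)^{-1}$ is a ratio of polynomials in $\field[z]$. Therefore each coordinate $\alpha_i(z)=\sum_n\chi_n(h_i)z^n$ is the $i$-th entry of $(I-zM)^{-1}c_0$, a rational function in $z$, which is exactly the claim.

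The main obstacle I anticipate is the bookkeeping in the trace factorisation: one must verify carefully that for $h\in H$ the endomorphism $\rho_{T(V)_n}(h)$, although not itself a tensor product of endomorphisms, has trace equal to $\sum_{(h)}\prod_{s}\chi_V(h_{(s)})$. This follows because $\rho_{T(V)_n}(h)=\sum_{(h)}\rho_V(h_{(1)})\otimes\dots\otimes\rho_V(h_{(n)})$ is a sum of honest tensor-product endomorphisms and trace is linear, so $\trace$ commutes with the Sweedler sum and then multiplies across each tensor factor; once this is pinned down, the reduction to the linear recursion $c_{n+1}=Mc_n$ and hence to the rationality of $(I-zM)^{-1}$ is routine. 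A secondary point worth stating cleanly is that all of this takes place over an arbitrary field, so no characteristic or semisimplicity hypothesis is needed for this lemma — those enter only later when identifying $\chi_n(t)$ with $\dim T(V)_n^H$.
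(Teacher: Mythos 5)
Your proposal is correct and takes essentially the same route as the paper: both extract from the comultiplication a first-order linear recursion for the character vector $(\chi_n(h_1),\dots,\chi_n(h_r))$ and conclude by inverting a matrix over $\field[z]$ whose determinant has nonzero constant term, the paper by applying Cramer's rule to the system $M(z)X=\eta$ and you by summing the geometric series $(I-zM)^{-1}c_0$, which is the same computation. (One harmless bookkeeping slip: your matrix $M_{ki}=\sum_j c^k_{ij}a_j$ is the coordinate form of the recursion $\chi_{n+1}(h)=\sum_{(h)}\chi_n(h_{(1)})\chi_V(h_{(2)})$, with $\chi_V$ on the second Sweedler leg, rather than of the recursion you display with $\chi_V$ on the first leg; by coassociativity both recursions hold, so the argument is unaffected.)
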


\begin{proof}
For each $i=1,\dots,r$, there exist uniquely determined $a_{ij}\in H, j=1,\dots,n$
such that
$$
\Delta(h_i) = \sum_{j=1}^r a_{ij}\otimes h_j.
$$ 
Let $\lambda_{ij} = \chi_1(a_{ij})\in\field$.
So $\chi_0(h_i)=\varepsilon(h_i)$ and for $n\in\N$, $n\geq 1$, we have
$$
\chi_n(h_i) = \sum_{j=1}^r\chi_1(a_{ij})\chi_{n-1}(h_j)
  =\sum_{j=1}^r\lambda_{ij}\chi_{n-1}(h_j).
$$
This is because, being characters, the $\chi_n$ are multiplicative, in the
sense that, for all $h\in H$, $\chi_n(h)=\sum_{(h)}\chi_1(h_{(1)})\chi_{n-1}(h_{(2)})$, for,
by definition, $T(V)_n=T(V)_1\otimes T(V)_{n-1}$ and the action of $H$ on $T(V)_n$ is induced
by its action on $T(V)_1$ and on $T(V)_{n-1}$ through $\Delta$.

Therefore,
\begin{equation*}\begin{split}
\alpha_i(z)&= \sum_{n\in\N}\chi_n(h_i)z^n  = \varepsilon(h_i) + \sum_{n\geq 1}\chi_n(h_i)z^n\\
  &= \varepsilon(h_i)+\sum_{n\geq 1}\sum_{j=1}^r\lambda_{ij}\chi_{n-1}(h_j)z^n\\
  &= \varepsilon(h_i)+\sum_{j=1}^r\lambda_{ij}z\sum_{n\geq 1}\chi_{n-1}(h_j)z^{n-1}\\
  &= \varepsilon(h_i)+\sum_{j=1}^r\lambda_{ij}z\alpha_j(z).
\end{split}\end{equation*}

It follows that, for each $i=1,\dots,r$, we have
$$
(\lambda_{ii}z-1)\alpha_i(z)+\sum_{j\ne i}\lambda_{ij}z\alpha_j(x) = -\varepsilon(h_i).
$$
That is, $(\alpha_1(z),\dots,\alpha_r(z))$ is a solution of the linear system
$M(z)X=\eta$ over $\field(z)$, where
$$ M(z)=
\begin{bmatrix}
\lambda_{11}z-1 & \lambda_{12}z   & \cdots & \lambda_{1r}z  \\
\lambda_{21}z   & \lambda_{22}z-1 & \cdots & \lambda_{2r}z  \\
\vdots          & \vdots          & \ddots & \vdots         \\
\lambda_{r1}    & \lambda_{r2}z   & \cdots & \lambda_{rr}z-1
\end{bmatrix}
\quad
\text{and}
\quad
\eta = 
-\begin{bmatrix}
\varepsilon(h_1)\\\varepsilon(h_2)\\\vdots\\\varepsilon(h_r)
\end{bmatrix}.
$$
Now $\det M(0)=(-1)^r$ and, hence, $M(z)$ is an invertible matrix
over $\field(z)$. It follows, by Cramer's rule, that $\alpha_i(z)\in \field(z)$,
for all $i=1,\dots,r$.
\end{proof}

We point out that the proof of Lemma~\ref{le:3} not only does give
the rationality of the series $\alpha_i(z)$, but also provides
explicit formulas for them as quotients of polynomials with coefficients in $\field$.
We shall explore this fact below.

\begin{theorem} \label{th:4}
Let $H$ be a semisimple Hopf algebra, let $V$ be a finite dimensional-vector space
and suppose that the tensor algebra $T(V)$ of $V$ is an $H$-module algebra with
a linear action of $H$. If $\charac\field=0$, then $P(T(V)^H,z)\in\Q(z)$.
\end{theorem}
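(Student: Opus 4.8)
The plan is to express $P(T(V)^H,z)$ as an explicit $\field$-linear combination of the rational series furnished by Lemma~\ref{le:3}, and then to descend the field of coefficients from $\field$ down to $\Q$. First I would use that a semisimple Hopf algebra is finite-dimensional, so I may fix a basis $\{h_1,\dots,h_r\}$ of $H$ and apply Lemma~\ref{le:3} to conclude that each $\alpha_i(z)=\alpha_{h_i}(z)$ lies in $\field(z)$. By Maschke's theorem there is a left integral $t$ with $\varepsilon(t)=1$; writing $t=\sum_{i=1}^r c_i h_i$ with $c_i\in\field$ and using that each character $\chi_n$ is linear, I obtain
$$
\alpha_t(z)=\sum_{n\in\N}\chi_n(t)z^n=\sum_{i=1}^r c_i\,\alpha_i(z)\in\field(z).
$$
On the other hand, Corollary~\ref{cor:2}, applied to the graded $H$-module algebra $A=T(V)$ (whose homogeneous components $T(V)_n$ are finite-dimensional since $V$ is), identifies $P(T(V)^H,z)$ with precisely this series $\alpha_t(z)$. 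Hence $P(T(V)^H,z)\in\field(z)$.

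Next I would record that the coefficients of this series are integers, not merely elements of $\field$. Indeed, by Proposition~\ref{prop:1} and the hypothesis $\charac\field=0$, each coefficient $\chi_n(t)$ equals $\dim T(V)_n^H$, a nonnegative integer. Thus $P(T(V)^H,z)$ is a power series in $\Z[[z]]\subseteq\Q[[z]]$ that happens to represent a rational function over the possibly large field $\field$ (which, being of characteristic zero, contains $\Q$ as its prime subfield).

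The hard part will be the final descent: a power series with rational coefficients that represents a rational function over an extension field already represents a rational function over $\Q$. I would establish this by Kronecker's criterion, which characterises rationality of $\sum_{n} a_n z^n$ by the finiteness of the rank of the infinite Hankel matrix $(a_{i+j})_{i,j\ge 0}$. Since here the entries $a_n=\chi_n(t)$ are rational numbers, every minor of this Hankel matrix is a rational number, and whether such a number vanishes is insensitive to the ambient field; therefore the rank computed over $\field$ coincides with the rank computed over $\Q$. Rationality over $\field$ forces this common rank to be finite, and so, by the same criterion over $\Q$, we conclude $P(T(V)^H,z)\in\Q(z)$. An equivalent and equally acceptable route for this last step is to note that, for a fixed degree bound, the condition that a denominator $q(z)$ clear the tail of $\sum_n\chi_n(t)z^n$ is a homogeneous linear system in the coefficients of $q$ with rational coefficient matrix; a nontrivial solution over $\field$ then yields one over $\Q$, because the solution space dimension is governed by the rank, which is again field-invariant. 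The preceding reductions are routine consequences of Lemma~\ref{le:3} and Corollary~\ref{cor:2}; it is this invariance-of-rank argument that carries the real content of the theorem.
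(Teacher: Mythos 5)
Your proposal is correct and follows essentially the same route as the paper: you use Corollary~\ref{cor:2} to write $P(T(V)^H,z)=\sum_{n}\chi_n(t)z^n$ as an $\field$-linear combination of the series $\alpha_i(z)$, invoke Lemma~\ref{le:3} for rationality over $\field$, observe the coefficients are rational, and descend to $\Q(z)$ via Kronecker's Hankel-rank criterion, which is precisely the theorem the paper cites for this last step. The only difference is cosmetic: where the paper cites Kronecker's theorem on Hankel operators, you additionally supply the (correct) field-invariance-of-rank argument that makes the descent self-contained.
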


\begin{proof}
Let $\{h_1,\dots,h_r\}$ be basis for $H$, let $t$ be a left integral in $H$ 
with $\varepsilon(t)=1$ and let $\mu_i\in\field$, $i=1,\dots,r$ be scalars
such that $t=\sum_{i=1}^r \mu_ih_j$. Then, by Corollary~\ref{cor:2}, we
get $P(T(V)^H,z)=\sum_{n\in\N}\chi_n(t)z^n=\sum_{i=1}^r\mu_i\alpha_i(z)$, 
where $\alpha_i(z)=\sum_{n\in\N}\chi_n(h_i)z^n$. It follows from Lemma~\ref{le:3}
that $P(T(V)^H,z)\in\field(z)$. Now, since $P(T(V)^H,z)$ has rational
coefficients and is a rational function over $\field\supseteq\Q$, it follows
from Kronecker's theorem on Hankel operators (see, e.g., \cite[Th.~3.11]{jP88}) that
$P(T(V)^H,z)\in\Q(z)$.
\end{proof}

If $G$ is a finite group and we let $H=\field G$ be the group algebra
of $G$ over $\field$ with its usual Hopf algebra structure, then
the proofs of Theorem~\ref{th:4} and Lemma~\ref{le:3}, using $G$ as
a $\field$-basis for $H$, allow us to
reconstruct Dicks and Formanek's formula for the the subalgebra of
invariants of a free algebra under the action of a finite group of
linear automorphisms when $\charac\field=0$ (see \cite{DF82}):
$$
P(T(V)^H) = \frac{1}{|G|}\sum_{g\in G}\frac{1}{1-\chi_1(g)z}.
$$

In \cite{FM10}, the authors have also explored the particular case of
$H=(\field G)^{\ast}$, the dual Hopf algebra of the group algebra $\field G$ 
of a finite group $G$. In this case, a linear $H$-module algebra 
structure on $T(V)$ amounts to a ``linear'' grading on $T(V)$ by
the group $G$. It has been possible to obtain a sufficiently explicit formula 
for the Hilbert series of the homogeneous component associated to the
identity of $G$ in order to produce a criterion for finite generation
of this subalgebra.


\begin{thebibliography}{FMP04}

\bibitem[DNR01]{DNR01}
  \textsc{S. D\u asc\u alescu, C. N\u ast\u asescu and \c S. Raianu},
  \textit{Hopf Algebras. An Introduction},
  Marcel Dekker, New York, 2001.

\bibitem[DF82]{DF82}
 \textsc{W. Dicks and E. Formanek},
 Poincar\'e series and a problem of S.~Montgomery,
 \textit{Linear and Multilinear Algebra}~\textbf{12} (1982/83), 21--30.

\bibitem[FM07]{FM07}
  \textsc{V. O. Ferreira and L. S. I. Murakami},
  Finitely generated invariants of Hopf algebras on free associative algebras, 
  \textit{Linear Algebra Appl.}~\textbf{420} (2007), 70--78. 

\bibitem[FM10]{FM10}
  \textsc{V. O. Ferreira and L. S. I. Murakami},
  On free associative algebras linearly graded by finite groups,
  \textit{Groups, Algebras and Applications}, 185--191, Contemp. Math.~\textbf{537},
  Amer. Math. Soc., Providence, RI, 2011.

\bibitem[FMP04]{FMP04}
  \textsc{V. O. Ferreira, L. S. I. Murakami and A. Paques},
  A Hopf-Galois correspondence for free algebras,
  \textit{J. Algebra}~\textbf{276} (2004), 407--416.
%
%
%
%
\bibitem[Mo93]{sM93a}
  \textsc{S. Montgomery},
  \textit{Hopf Algebras and Their Actions on Rings},
  Amer. Math. Soc., Providence, RI, 1993.
  
\bibitem[Pa88]{jP88}
  \textsc{J. R. Partington},
  \textit{An Introduction to Hankel Operators},
  Cambridge University Press, Cambridge, 1988.
  
%

\end{thebibliography}
\end{document}